\documentclass[11pt,reqno,sumlimits]{amsart}
\usepackage{amsfonts, amsmath, amscd, amssymb, euscript, amsthm, array, booktabs,
dcolumn, shortvrb, tabularx, units, url, mathrsfs}
\usepackage[pdftex]{graphicx}
\usepackage[pdftex]{hyperref}
\usepackage[all]{xy}
\normalsize

\DeclareMathOperator{\todd}{Todd}

\DeclareMathOperator{\an}{an}
\DeclareMathOperator{\ind}{ind}

\DeclareMathOperator{\spin}{spin}

\DeclareMathOperator{\odd}{odd}
\DeclareMathOperator{\even}{even}
\DeclareMathOperator{\im}{Im}

\DeclareMathOperator{\ch}{ch}
\DeclareMathOperator{\GL}{GL}
\DeclareMathOperator{\U}{U}

\DeclareMathOperator{\SO}{SO}

\DeclareMathOperator{\str}{Str}

\begin{document}
\setlength{\baselineskip}{1.4\baselineskip}
\theoremstyle{definition}
\newtheorem{defi}{Definition}
\newtheorem{remark}{Remark}
\newtheorem{coro}{Corollary}
\newtheorem{exam}{Example}
\newtheorem{thm}{Theorem}
\newtheorem{prop}{Proposition}
\newcommand{\wt}[1]{{\widetilde{#1}}}
\newcommand{\ov}[1]{{\overline{#1}}}
\newcommand{\wh}[1]{{\widehat{#1}}}
\newcommand{\poin}{Poincar$\acute{\textrm{e }}$}
\newcommand{\deff}[1]{{\bf\emph{#1}}}
\newcommand{\boo}[1]{\boldsymbol{#1}}
\newcommand{\abs}[1]{\lvert#1\rvert}
\newcommand{\norm}[1]{\lVert#1\rVert}
\newcommand{\inner}[1]{\langle#1\rangle}
\newcommand{\poisson}[1]{\{#1\}}
\newcommand{\biginner}[1]{\Big\langle#1\Big\rangle}
\newcommand{\set}[1]{\{#1\}}
\newcommand{\Bigset}[1]{\Big\{#1\Big\}}
\newcommand{\BBigset}[1]{\bigg\{#1\bigg\}}
\newcommand{\dis}[1]{$\displaystyle#1$}
\newcommand{\R}{\mathbb{R}}
\newcommand{\N}{\mathbb{N}}
\newcommand{\Z}{\mathbb{Z}}
\newcommand{\Q}{\mathbb{Q}}
\newcommand{\E}{\mathcal{E}}
\newcommand{\G}{\mathcal{G}}
\newcommand{\F}{\mathcal{F}}
\newcommand{\V}{\mathcal{V}}
\newcommand{\W}{\mathcal{W}}
\newcommand{\SSS}{\mathcal{S}}
\newcommand{\h}{\mathbb{H}}
\newcommand{\g}{\mathfrak{g}}
\newcommand{\C}{\mathbb{C}}
\newcommand{\A}{\mathcal{A}}
\newcommand{\M}{\mathcal{M}}
\newcommand{\HH}{\mathcal{H}}
\newcommand{\D}{\mathcal{D}}
\newcommand{\PP}{\mathcal{P}}
\newcommand{\K}{\mathcal{K}}
\newcommand{\RR}{\mathcal{R}}
\newcommand{\RRR}{\mathscr{R}}
\newcommand{\DDD}{\mathscr{D}}
\newcommand{\so}{\mathfrak{so}}
\newcommand{\gl}{\mathfrak{gl}}
\newcommand{\aaa}{\mathbb{A}}
\newcommand{\bbb}{\mathbb{B}}
\newcommand{\DD}{\mathsf{D}}
\newcommand{\ccc}{\bold{c}}
\newcommand{\sss}{\mathbb{S}}
\newcommand{\cdd}[1]{\[\begin{CD}#1\end{CD}\]}
\normalsize
\title[On an index theorem by Bismut]{On an index theorem by Bismut}
\author{Man-Ho Ho}
\address{Department of Mathematics\\ Hong Kong Baptist University}
\email{homanho@hkbu.edu.hk}
\maketitle
\nocite{*}
\begin{abstract}
In this paper we give a proof of an index theorem by Bismut. As a consequence we obtain
another proof of the Grothendieck--Riemann--Roch theorem in differential cohomology.
\end{abstract}
\tableofcontents
\section{Introduction}
The differential Grothendieck--Riemann--Roch theorem \cite[Theorem 6.19]{BS09},
\cite[Corollary 8.26]{FL10}, \cite[Theorem 1]{H14} (abbreviated as dGRR) is a lift of
the classical Grothendieck--Riemann--Roch theorem to differential cohomology. It states
that for a proper submersion $\pi:X\to B$ with closed $\spin^c$ fibers of even relative
dimension, the following diagram commutes.
\begin{equation}\label{eq 1}
\begin{CD}
\wh{K}(X) @>\wh{\ch}>> \wh{H}^{\even}(X; \R/\Q) \\ @V\ind^{\an}VV @VV\wh{\int_{X/B}}
\wh{\todd}(\wh{\nabla}^{T^VX})\ast(\cdot) V \\ \wh{K}(B) @>>\wh{\ch}> \wh{H}^{\even}
(B; \R/\Q)
\end{CD}
\end{equation}
Here $\wh{K}$ is differential $K$-theory \cite{BS10a, SS08a, FL10} and $\wh{H}$ is
Cheeger--Simons differential characters \cite{CS85, BB14}.

In \cite{H14} the proof of the dGRR is to reduce it to an index theorem by Bismut
\cite[Theorem 1.15]{B05}:
\begin{equation}\label{eq 2}
\wh{\ch}(\ker(\DD^E), \nabla^{\ker(\DD^E)})+i_2(\wt{\eta}(\E))=\wh{\int_{X/B}}
\wh{\todd}(T^VX, \wh{\nabla}^{T^VX})\ast\wh{\ch}(E, \nabla^E).
\end{equation}
One can regard (\ref{eq 2}) as a lift of the local family index theorem \cite{BC89}
to differential characters. Bismut's proof of (\ref{eq 2}) involves certain adiabatic
limits arguments given in \cite{BC89, D91, L94} and an Atiyah--Patodi--Singer index
theorem in differential characters \cite[Theorem 9.2]{CS85}. In this paper we give a
proof of (\ref{eq 2}), which is inspired by \cite{BB14} and does not make use of the
above results.

Section 2 contains the background material needed in this paper. Section 3 contains
the main result of this paper.

\section*{Acknowledgement}
The author would like to thank Thomas Schick for pointing out a mistake in an earlier
version of the paper.

\section{Background Materials}
\subsection{Cheeger--Simons differential characters}

We recall some basic properties of differential characters, and refer to 
\cite{CS85, BB14} for the details.

Let $X$ be a smooth manifold and $k\geq 1$, and $A$ a proper subring of $\R$. A degree 
$k$ differential character $f$ with coefficients in $\R/A$ is a group homomorphism 
$f:Z_{k-1}(X)\to\R/A$ with a fixed $\omega_f\in\Omega^k(X)$ such that for all $c\in 
C_k(X)$, \dis{f(\partial c)=\int_c\omega_f\mod A}. The abelian group of degree $k$ 
differential characters is denoted by $\wh{H}^k(X; \R/A)$. Denote by $\Omega^k_A(X)$ 
the group of closed $k$-forms with periods in $A$. It is easy to see that $\omega_f\in
\Omega^k_A(X)$ and is uniquely determined by $f\in\wh{H}^k(X; \R/A)$. The map $\delta_1:
\wh{H}^k(X; \R/A)\to\Omega^k_A(X)$ by $\delta_1(f)=\omega_f$. The map 
\dis{i_2:\frac{\Omega^{k-1}(X)}{\Omega^{k-1}_A(X)}\to\wh{H}^k(X; \R/A)}, defined by 
\dis{i_2(\alpha)(z)=\int_z\alpha\mod A}, is injective. In the following diagram, every 
square and triangle commutes, and the two diagonal sequences are exact.
\begin{equation}\label{eq 3}
\xymatrix{\scriptstyle 0 \ar[dr] & \scriptstyle & \scriptstyle & \scriptstyle & \scriptstyle 0
\\ & \scriptstyle H^{k-1}(X; \R/A) \ar[rr]^{-B} \ar[dr]^{i_1} & \scriptstyle & \scriptstyle
H^k(X; A) \ar[ur] \ar[dr]^r & \scriptstyle \\ \scriptstyle H^{k-1}(X; \R) \ar[ur]^{\alpha}
\ar[dr]_{\beta} & \scriptstyle & \scriptstyle\wh{H}^k(X; \R/A) \ar[ur]^{\delta_2}
\ar[dr]^{\delta_1} & \scriptstyle & \scriptstyle H^k(X; \R) \\ \scriptstyle & \scriptstyle
\frac{\Omega^{k-1}(X)}{\Omega^{k-1}_{A}(X)} \ar[rr]_{d} \ar[ur]^{i_2} & \scriptstyle &
\scriptstyle\Omega^k_{A}(X) \ar[ur]^s \ar[dr] & \scriptstyle \\ \scriptstyle 0 \ar[ur] &
\scriptstyle & \scriptstyle & \scriptstyle & \scriptstyle 0}
\end{equation}
The maps $\delta_1$ and $\delta_2$ are called the curvature and the characteristic class
in literatures respectively.

There is a unique ring structure for $\wh{H}^*(X; \R/A)$ \cite[Corollary 32]{BB14},
denoted by $\ast$. For a fiber bundle $\pi:X\to B$ with closed oriented fibers, the
``integration along the fiber", denoted by \dis{\wh{\int_{X/B}}}, exists and is unique
\cite[Theorem 39]{BB14}.

\subsection{Index bundle and Bismut--Cheeger eta form}

In this subsection we recall the construction of the index bundle and of the
Bismut--Cheeger eta form. We refer to \cite{BGV, LM89} for the details.

Let $E\to X$ be a complex vector bundle with a Hermitian metric $h$ and $\nabla$ a
unitary connection on $E\to X$. Let $\pi:X\to B$ be a proper submersion of even relative
dimension $n$, and $T^VX\to X$ the vertical tangent bundle which is assumed to have a
metric $g^{T^VX}$. A given horizontal distribution $T^HX\to X$ and a Riemannian metric
$g^{TB}$ on $B$ determine a metric on $TX\to X$  by $g^{TX}:=g^{T^VX}\oplus\pi^*g^{TB}$.
If $\nabla^{TX}$ is the corresponding Levi-Civita connection on $TX\to X$, then
$\nabla^{T^VX}:=P\circ\nabla^{TX}\circ P$ is a connection on $T^VX\to X$, where $P:TX\to
T^VX$ is the orthogonal projection. Assume the vertical bundle $T^VX\to X$ has a
$\spin^c$-structure; i.e., the principal $\SO(n)$-bundle $\SO(T^VX)\to X$ admits a
$\spin^c(n)$ reduction. Denote by $S^c(T^VX)\to X$ the spinor bundle, which is a 
complex vector bundle, associated to the $\spin^c$ structure of $T^VX\to X$. Note that 
$S^c(T^VX)\cong S(T^VX)\otimes L^{\frac{1}{2}}(X)$, where $S(T^VX)$ is the spinor bundle
associated to the locally defined $\spin$ structure of $T^VX\to X$, and $L^{\frac{1}{2}}
(X)$ is a locally defined Hermitian line bundle (see \cite[(D.19)]{LM89}) Note that 
their tensor product is globally defined. Write $\nabla^{T^VX}$ for both the Levi-Civita
connection on $T^VX\to X$ and also its lift to $S(T^VX)$. Choose a unitary connection 
$\nabla^{L^VX}$ on $L^{\frac{1}{2}}(X)$. Define a connection $\wh{\nabla}^{T^VX}$ on 
$S^c(T^VX)\to X$ by $\wh{\nabla}^{T^VX}:=\nabla^{T^VX}\otimes\nabla^{L^VX}$. The Todd 
form $\todd(\wh{\nabla}^{T^VX})$ of $S^c(T^VX)\to X$ is defined by
$$\todd(\wh{\nabla}^{T^VX}):=\wh{A}(\nabla^{T^VX})\wedge e^{\frac{1}{2}c_1(\nabla^{L^VX})}.$$

The Bismut--Cheeger eta form \dis{\wt{\eta}(\E)\in\frac{\Omega^{\odd}(B)}{\im(d)}}
associated to $\E:=(E, h, \nabla)$ is defined as follows. Consider the infinite-rank
superbundle $\pi_*E\to B$, where the fibers at each $b\in B$ is given by
$$(\pi_*E)_b:=\Gamma(X_b, (S^c(T^VX)\otimes E)|_{X_b}).$$
Recall that $\pi_*E\to B$ admits an induced Hermitian metric and a connection
$\nabla^{\pi_*E}$ compatible with the metric \cite[\S 9.2, Proposition 9.13]{BGV}. For
each $b\in B$, the canonically constructed Dirac operator
$$\DD^E_b:\Gamma(X_b, (S^c(T^VX)\otimes E)|_{X_b})\to\Gamma(X_b, (S^c(T^VX)\otimes 
E)|_{X_b})$$
gives a family of Dirac operators, denoted by $\DD^E:\Gamma(X, S^c(T^VX)\otimes E)\to
\Gamma(X, S^c(T^VX)\otimes E)$. Assume the family of kernels $\ker(\DD^E_b)$ has locally 
constant dimension, i.e., $\ker(\DD^E)\to B$ is a finite-rank Hermitian superbundle. Let
$P:\pi_*E\to\ker(\DD^E)$ be the orthogonal projection, $h^{\ker(\DD^E)}$ be the
Hermitian metric on $\ker(\DD^E)\to B$ induced by $P$, and $\nabla^{\ker(\DD^E)}:=P\circ
\nabla^{\pi_*E}\circ P$ be the connection on $\ker(\DD^E)\to B$ compatible to
$h^{\ker(\DD^E)}$.

The scaled Bismut superconnection $\aaa_t:\Omega(B, \pi_*E)\to\Omega(B, \pi_*E)$
\cite[Definition 3.2]{B86} (see also \cite[Proposition 10.15]{BGV} and \cite[(1.4)]{D91}),
is defined by
$$\aaa_t:=\sqrt{t}\DD^E+\nabla^{\pi_*E}-\frac{c(T)}{4\sqrt{t}},$$
where $c(T)$ is the Clifford multiplication by the curvature 2-form of the fiber bundle.
The Bismut--Cheeger eta form $\wt{\eta}(\E)$ \cite[(2.26)]{BC89} (see also \cite{D91}
and \cite[Theorem 10.32]{BGV}) is defined by
$$\wt{\eta}(\E):=\frac{1}{2\sqrt{\pi}}\int^\infty_0\frac{1}{\sqrt{t}}\str\bigg
(\frac{d\aaa_t}{dt}e^{-\aaa_t^2}\bigg)dt.$$
The local family index theorem states that
\begin{equation}\label{eq 4}
d\wt{\eta}(\E)=\int_{X/B}\todd(\wh{\nabla}^{T^VX})\wedge\ch(\nabla^E)-\ch(\nabla^{\ker
(\DD^E)}).
\end{equation}

Let $f:\wt{B}\to B$ be a smooth map. In \cite[\S 2.3.2]{BS09} the pullback of the above
geometric data is studied, and, in particular, the Bunke eta form (\cite[Definition 2.2.16]
{B09}) is shown to respect pullback. Since the Bismut--Cheeger eta form is a special case
of the Bunke eta form, we have
\begin{equation}\label{eq 5}
\wt{\eta}(f^*\E)=f^*\wt{\eta}(\E).
\end{equation}
One can also prove (\ref{eq 5}) directly as in \cite[\S 2.3.2]{BS09}.

\section{Main result}

In this section we prove the main result in this paper. We employ the setup and 
assumptions made in Section 2.2. For write $\E$ for $(E, h, \nabla)$, where $E\to X$ is 
a Hermitian bundle with a Hermitian metric $h$ and $\nabla$ a unitary connection on 
$E\to X$.

As in \cite{H14} it suffices to prove (\ref{eq 2}) in the special case where the family
of kernels of the Dirac operators has constant dimension, i.e., $\ker(\DD^E)\to B$ is a
superbundle. The general case of (\ref{eq 2}) follows from a standard perturbation
argument as in \cite[\S 7]{FL10} and its proof is essentially the same as the special
case.

\begin{prop}\label{prop 1}
For any $\E$, the differential character
\begin{equation}\label{eq 6}
\wh{\int_{X/B}}\wh{\todd}(T^VX, \wh{\nabla}^{T^VX})\ast\wh{\ch}(E, \nabla)-\wh{\ch}
(\ker(\DD^E), \nabla^{\ker(\DD^E)})
\end{equation}
is uniquely characterized by the conditions that it is natural and its curvature is 
given by
\begin{equation}\label{eq 7}
\int_{X/B}\todd(\wh{\nabla}^{T^VX})\wedge\ch(\nabla)-\ch(\nabla^{\ker(\DD^E)}).
\end{equation}
\end{prop}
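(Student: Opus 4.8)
The plan is to verify that the differential character in \eqref{eq 6} satisfies the two stated conditions and then to show that these two conditions pin it down uniquely. The first, formal, half splits into a curvature computation and a naturality check; the genuine content is the uniqueness statement.

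First I would compute the curvature. Since the curvature map $\delta_1$ of \eqref{eq 3} is a ring homomorphism sending $\ast$ to $\wedge$, is compatible with fiber integration in the sense that $\delta_1\circ\wh{\int_{X/B}}=\int_{X/B}\circ\,\delta_1$, and satisfies $\delta_1(\wh{\ch}(E,\nabla))=\ch(\nabla)$, $\delta_1(\wh{\todd}(T^VX,\wh{\nabla}^{T^VX}))=\todd(\wh{\nabla}^{T^VX})$ and $\delta_1(\wh{\ch}(\ker(\DD^E),\nabla^{\ker(\DD^E)}))=\ch(\nabla^{\ker(\DD^E)})$, applying $\delta_1$ to \eqref{eq 6} yields \eqref{eq 7} at once. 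This is routine bookkeeping with the structure recorded in \eqref{eq 3}.

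Next I would check naturality. For a smooth map $f:\wt{B}\to B$ one forms the pulled-back family together with the pulled-back geometric data; the operations $\wh{\ch}$, $\wh{\todd}$ and $\ast$ commute with $f^*$ by construction, while $\wh{\int_{X/B}}$ commutes with $f^*$ by the base-change property of fiber integration for the Cartesian square attached to $f$. The only delicate point is that the index bundle $\ker(\DD^E)$, with its induced metric $h^{\ker(\DD^E)}$ and connection $\nabla^{\ker(\DD^E)}$, is compatible with pullback, which is precisely the naturality of the construction recalled in Section 2.2 together with \eqref{eq 5}. Hence \eqref{eq 6} is natural.

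The heart of the argument is uniqueness. Suppose $\phi$ is any other natural differential character with curvature \eqref{eq 7}, and let $\psi$ be the difference of \eqref{eq 6} and $\phi$. Then $\psi$ is natural and has vanishing curvature, so by the exactness of the diagonal sequences in \eqref{eq 3} we may write $\psi=i_1(u)$ for a class $u\in H^{\even-1}(B;\R/\Q)$ depending naturally on $\E$, and it remains to prove $u=0$. Here I would first observe, using the homotopy formula over $B\times[0,1]$, that because \eqref{eq 6} and $\phi$ carry the \emph{same} curvature along any smooth deformation of the geometric data the difference $\psi$ is invariant under such deformations; this reduces $u$ to a deformation-invariant natural flat class. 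I would then pull the whole situation back from a universal family by a classifying map, so that $u$ becomes the pullback of a single universal flat class, and exploit deformation-invariance to contract the connection and metric data to a reference configuration, where the class must vanish.

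I expect this last point to be the \emph{main obstacle}: identifying the deformation-invariant natural flat class $u$ and showing it is zero is exactly where the naturality hypothesis must be used in an essential way, rather than formally. Everything else — the curvature identity and the compatibility with pullback — follows directly from the properties of differential characters, of fiber integration, and of the index bundle construction recalled in Section 2.
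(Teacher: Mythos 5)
Your curvature computation and naturality check are fine; they correspond to what the paper dismisses in one line (``the differential character in \eqref{eq 6} obviously satisfies the conditions''). The genuine gap is in your uniqueness argument, at exactly the point you yourself flag as the ``main obstacle.'' After the homotopy formula shows that the flat difference class $u\in H^{\odd}(B;\R/\Q)$ (obtained via $i_1$ in \eqref{eq 3}) is invariant under deformations of the geometric data, you propose to ``contract the connection and metric data to a reference configuration, where the class must vanish.'' This cannot work: deformation invariance already says $u$ is independent of all connections and metrics, i.e.\ it is a purely topological characteristic class of the underlying data, so passing to a reference configuration changes nothing and gives no vanishing. What actually kills $u$ is a cohomological fact about the classifying space: a natural flat class of odd degree is pulled back from $H^{\odd}(\BU;\R/\Q)$, and this group is zero because $H^*(\BU;\Z)$ is torsion free and concentrated in even degrees. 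This is precisely the content of the result the paper cites for uniqueness (\cite[Proposition 3.1]{H15}), together with the caveat in its footnote: the uniqueness statement is \emph{false} for a general structure group $G$ whose $BG$ has torsion in its cohomology, because nonzero natural flat classes then exist. Your argument never uses any special property of the unitary or general linear group, so as written it would ``prove'' the statement in this false generality --- a sure sign the decisive step is missing.

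A secondary point: the move ``pull the whole situation back from a universal family by a classifying map'' needs more care than you give it, because the character \eqref{eq 6} is a functional of an entire proper submersion $\pi:X\to B$ with $\spin^c$ structure and geometric data, not merely of a vector bundle with connection over $B$, and there is no finite-dimensional universal object classifying such families. The paper's route sidesteps this by reducing to the notion of a differential characteristic class of complex vector bundles, to which \cite[Proposition 3.1]{H15} applies; a self-contained version of your argument would likewise have to identify precisely which bundle over (finite-dimensional approximations of) a classifying space your flat class $u$ is a characteristic class of --- for instance the index bundle $\ker(\DD^E)\to B$ together with the topological pushforward of $E$ --- and only then invoke $H^{\odd}(\BU;\R/\Q)=0$.
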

\begin{proof}
The differential character in (\ref{eq 6}) obviously satisfies the conditions. The
uniqueness of (\ref{eq 6}) come from \cite[Proposition 3.1]{H15}. However, 
\cite[Proposition 3.1]{H15} is only correct under a restrictive class of Lie 
groups\footnote{The author would like to thank Thomas Schick for bringing up this point.} 
$G$, namely, it is valid if the cohomology of $BG$ is torsion free. Example of such $G$ 
includes the stable general linear group $\GL(\C)$, and therefore the stable unitary 
group $\U$. Thus \cite[Proposition 3.1]{H15} is true for all differential characteristic 
classes of complex vector bundles of even degree, and therefore it can still be applied 
to our case.
\end{proof}
Bismut's theorem follows from the observation that the differential character 
$i_2(\wt{\eta}(\E))\in\wh{H}^{\even}(B; \R/\Q)$ satisfies the conditions stated in 
Proposition \ref{prop 1}: The naturality of $i_2(\wt{\eta}(\E))$ follows from (\ref{eq 5}). 
The curvature of $i_2(\wt{\eta}(\E))$ is given by (\ref{eq 7}) is a consequence of the 
commutativity of the lower triangle of (\ref{eq 3}) and the local family index theorem 
(\ref{eq 4}).
\bibliographystyle{amsplain}
\bibliography{MBib}
\end{document}